\documentclass[]{article}


\textwidth=6in
\textheight=9in
\topmargin=-.6in
\headheight=0in
\headsep=.5in
\hoffset  -.6in


\usepackage{cancel}

\usepackage{array}
\usepackage{amsmath}
\usepackage{amsthm}
\usepackage{amssymb}
\usepackage{graphicx}
\usepackage{MnSymbol}
\usepackage{amsfonts}
\usepackage{tabularx}
\usepackage{yfonts}
\usepackage{cancel}
\usepackage{makeidx}
\usepackage{url}
\usepackage{fancyhdr}
\usepackage{enumerate}

\usepackage[noblocks]{authblk}

\newenvironment{mainthrm*}
{\noindent {\bf Main Theorem.}
}
{
}

\theoremstyle{definition} \newtheorem{kap}{Definition}

\newtheorem{prop}{Proposition}

\def\+dot
{
\stackrel{.}{+}
}

\title{Irreducible Truth-Value Algebras Suffice for the \\ Completeness of Many First-Order Algebraic Logics}

\author{Richard DeJonghe, Kimberly Frey, and Tom Imbo$^{*}$ \\ {\small \emph{Department of Physics}} \\ {\small \emph{University of Illinois at Chicago}} \\ {\small \emph{845 W.~Taylor St., Chicago, IL 60607}}}

\date{}

\begin{document}

\maketitle

\begin{abstract}
It is well-known that a Hilbert-style deduction system for first-order classical logic is sound and complete for a model theory built using all Boolean algebras as truth-value algebras if and only if it is sound and complete for a model theory utilizing only irreducible Boolean algebras (which are all isomorphic to the two-element Boolean algebra $\mathbf{2}$).  In this paper, we prove an analogous result for \emph{any} first-order logic with an algebraic semantics satisfying certain minimal assumptions, and we then apply our result to first-order quantum logic.

\end{abstract}

\section{Introduction} \
\label{intro}

While first-order classical logic is typically developed using the two element Boolean algebra $\mathbf{2}$ as its algebra of truth values,  one obtains an equivalent logic by utilizing \emph{all} Boolean algebras.  More specifically, a formal deduction system is sound and complete for a model theory built using $\mathbf{2}$ as the truth value algebra if and only if it is sound and complete for a model theory utilizing all Boolean algebras \cite{Manin}.  A Boolean algebra is \emph{irreducible} if it is not isomorphic to a product of two Boolean algebras, and $\mathbf{2}$ is the \emph{only} non-trivial irreducible Boolean algebra (up to isomorphism).  Hence, we can re-state the above theorem as follows --- a formal deduction system is sound and complete for a model theory utilizing all Boolean algebras if and only if it is sound and complete for a model theory utilizing only irreducible Boolean algebras.  

There are many first-order logics other than classical which are naturally associated with a certain class of algebras --- some of these utilize the same syntactical structure as classical first-order logic but change the axioms (e.g.~quantum logic, intuitionistic logic, etc.), while others modify the relevant set of logical connectives (e.g.~modal logic, conditional logic, etc.).  It is natural to wonder whether an analogous result (to that for classical logic described above) concerning irreducible truth-value algebras applies to any such non-classical logic.  In this paper, we show that a first-order logic is sound and complete with respect to a semantics based on a class of algebras $\mathcal{K}$ if and only if it is sound and complete with respect to a semantics based on the irreducible algebras in $\mathcal{K}$.  The only assumptions in our proof are that the universal quantifier is evaluated in the semantics via a partial order on the algebras in $\mathcal{K}$, and that the class of models defining the semantics is reasonably well-behaved. (See below for a precise statement of our result.)

The plan of this paper is as follows.  In section \ref{sec:prelim} we introduce some preliminaries concerning truth-value algebras and define our model theory.  Following this, we prove our main result in section \ref{sec:main}.  In section \ref{QL} we apply this result to quantum logic.  Finally, in section \ref{sec:conc} we note some open questions.

\noindent\rule{6cm}{0.4pt}

{\footnotesize * imbo@uic.edu }

\section{Preliminary Material}
\label{sec:prelim}

\subsection{Truth Values}

We now discuss the truth values in the logics we consider, which will have both an order-theoretic as well as an algebraic structure.

\subsubsection{Posets}

Recall that a poset $(P,\leq)$ is a set $P$ along with a binary relation $\leq$ on $P$ that is reflexive, anti-symmetric, and transitive.   For a given poset $(P, \leq)$, and for a given $A \subseteq P$, we will use $\bigvee A$ and $\bigwedge A$ to represent the least upper bound and greatest lower bound for $A$, respectively (when such an element exists), or alternatively we may use the notation $\bigvee_{a \in A} a$ and $\bigwedge_{a \in A} a$, depending on context.  For a poset $P$, $\bigvee P$ is called the \emph{top element} (when it exists), and is denoted by $1$ (when $P$ is clear from the context).  

Given two posets $(P_1, \leq_1)$ and $(P_2, \leq_2)$, we call a map $g:P_1 \to P_2$ \emph{isotone} if $g(a) \leq_2 g(b)$ for all $a,b \in P_1$ such that $a \leq_1 b$.  An isotone map $g:P_1 \to P_2$ will further be called \emph{continuous} if, for any $A \subseteq P_1$ for which $\bigwedge A$ exists, we have that $\bigwedge g(A)$ exists,\footnote{For any map $g:P_1 \to P_2$, we define the notation $g(A) := \{ g(a) \ : \ a \in P_1 \}$. } and moreover that $\bigwedge g(A) = g( \bigwedge A)$.  Finally, if the two posets above have top elements $1_1$ and $1_2$, respectively, the map $g$ is called \emph{top-preserving} if $g(1_1) = 1_2$.

\subsubsection{Universal Algebra}

First, let $\mathbb{N} = \{0,1,2,\ldots\}$.  A \emph{type} is a pair $(\alpha,\tau)$, where either $\alpha = \{0,1,\ldots,n\}$ for some $n \in \mathbb{N}$, or $\alpha = \mathbb{N}$, and where $\tau : \alpha \to \mathbb{N}$ is such that $i \leq j$ implies $\tau(i) \geq \tau(j)$ for all $i,j \in \alpha$.   An \emph{algebra of type $(\alpha,\tau)$} is then a tuple $(A,O,\ell)$, where $A$ is a set, $O$ is a finite or countably infinite set (called the \emph{operations} of the algebra), $\ell:O \to \alpha$ is a bijective map (called the \emph{labeling function}), and $f:A^{\tau(\ell(f))} \to A$ for each $f \in O$.  

For notational convenience, when both the algebra $(A,O,\ell)$ and its type $(\alpha, \tau)$ are clear from the context, for any $f \in O$ we call $N_f :=  \tau(\ell(f))$ the \emph{arity of $f$}.

Note that for any two algebras $(A_1,O_1,\ell_1)$ and $(A_2,O_2,\ell_2)$  of the same type $(\alpha,\tau)$, $O_1$ and $O_2$ are naturally isomorphic (as sets) via the map\footnote{The symbol ``$\circ$'' denotes composition of maps.} $\ell_2^{-1} \circ \ell_1$.  Then, for algebras $(A_1, O_1, \ell_1)$ and $(A_2, O_2, \ell_2)$ of the same type, we call a map $g:A_1 \to A_2$ an \emph{algebra homomorphism} if, for every $f \in O_1$, we have that, $g(f(a_1,\ldots, a_{N_f})) = \tilde{f}(g(a_1), \ldots, g(a_{N_f}))$ for all $a_1, \ldots, a_{N_f} \in A_1$, where $\tilde{f}:= \ell_2^{-1} \circ \ell_1(f)$.

\subsubsection{Truth-value Algebras}

We define a \emph{truth-value algebra} of type $(\alpha,\tau)$ to be a tuple $(A,\leq,O,\ell)$ where $(A,O,\ell)$ is an algebra of type $(\alpha,\tau)$, and where $(A,\leq)$ is a poset with a top element.  When $\alpha$ is a finite set with largest element $n$, we simply say that the truth-value algebra is of type $(\tau(0), \tau(1), \ldots, \tau(n))$.  Also, when it will not lead to confusion, we will refer to the set $A$ as the truth-value algebra.

Note that there is a trivial truth-value algebra (for any given type $(\alpha,\tau)$) consisting of the single element $1$, where every $f \in O$ is the trivial map, and we denote this truth-value algebra by $\mathbf{1}_{(\alpha,\tau)}$, or just by $\mathbf{1}$ when the type is clear from the context.  

Boolean algebras are prototypical examples of truth-value algebras, having the type $(2,2,1)$.  Explicitly, a Boolean algebra is a tuple $(B,\leq,O,\ell)$, where $O = \{ \wedge, \vee, \neg \}$, and where $\ell(\wedge) = 0$, $\ell(\vee) = 1$ and $\ell(\neg) = 2$ (so that $N_{\wedge} = N_{\vee} = 2$ and $N_{\neg} = 1$). Typically, for $a,b \in B$, we write $a \wedge b$ rather than $\wedge(a,b)$, and $a \vee b$ rather than $\vee(a,b)$.  The following constraints must be satisfied for all $a,b,c \in B$.

\begin{enumerate}[(i)]
\item $\bigwedge \{a,b\}$ exists and $a \wedge b = \bigwedge \{a,b\}$,
\item $a \leq b$ implies that $\neg b \leq \neg a$,
\item $\neg (\neg a) = a$,
\item $a \vee \neg a = 1$
\item $a \vee (b \wedge c) = (a \vee b) \wedge (a \vee c)$.
\end{enumerate}

The partial order then satisfies $a \leq b$ whenever $a \wedge b = a$ for all $a,b \in B$.  In addition to a top element, $B$ also has a bottom element $0 := \neg 1$.  $\mathbf{1}$ is easily seen to be a Boolean algebra.  The smallest non-trivial Boolean algebra consists of $B = \{0,1\}$ with the standard operations, and we denote this Boolean algebra by $\mathbf{2}$.

We will need to define a few simple notions concerning truth-value algebras.  First, for truth-value algebras $(A_1, \leq_1, O_1, \ell_1)$ and $(A_2, \leq_2, O_2, \ell_2)$ of the same type, a \emph{homomorphism} is a map $g:A_1 \to A_2$ which is both an algebra homomorphism (from $(A_1, O_1, \ell_1)$ to $(A_2, O_2, \ell_2)$), as well as an isotone, continuous, and top-preserving map (from $(A_1, \leq_1)$ to $(A_2, \leq_2)$).  If a homomorphism is 1-1 and onto, it is called an isomorphism, and  whenever there exists some isomorphism $g:A_1 \to A_2$,  then $A_1$ and $A_2$ are said to be \emph{isomorphic} (denoted $A_1 \simeq A_2$).  

We also note that one can define products of truth-value algebras of the same type in the usual way.  Given two truth-value algebras $(A_1, \leq_1, O_1, \ell_1)$ and $(A_2, \leq_2, O_2, \ell_2)$, we can give the (set theoretic) Cartesian product $A_1 \times A_2$ the structure of a truth-value algebra $(A_1 \times A_2, \leq, O, \ell)$ by defining $(a_1, a_2) \leq (b_1, b_2)$ exactly when $ a_1 \leq_1 b_1$ and $a_2 \leq_2 b_2$, and for each $f \in O_1$, (letting $\tilde{f} := \ell_2^{-1} \circ \ell_1(f)$) defining $\hat{f}((a_1,b_1),  (a_2,b_2), \ldots, (a_{N_f}, b_{N_f})) := (f(a_1, \ldots, a_{N_f}),\tilde{f}(b_1, \ldots, b_{N_f})) $.  Then we define $O$ to be the collection of all such $\hat{f}$'s (for $f \in O_1$), and also set $\ell(\hat{f}) := \ell_1(f)$.  

Then, for any truth-value algebras $A_1$ and $A_2$ of the same type, we have $A_1 \times A_2 \simeq A_2 \times A_1$, as well as that $A \times \mathbf{1} \simeq A$ for any truth-value algebra $A$.  Also, for $A_1 \times A_2$, we have the natural projection maps $p_i:A_1 \times A_2 \to A_i$ for $i =1,2$, which behave as one expects.

\begin{prop}
\label{prop:p_hom}
Let $A_1$ and $A_2$ be truth-value algebras of the same type.  Then the projection maps $p_i:A_1 \times A_2 \to A_i$ for $i =1,2$ are both homomorphisms.
\end{prop}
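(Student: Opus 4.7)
The plan is to unpack the definition of ``homomorphism of truth-value algebras'' and verify the four required conditions one at a time for $p_1$, with $p_2$ being entirely analogous. By symmetry it suffices to treat $p_1$. The four conditions are: (a) $p_1$ is an algebra homomorphism, (b) $p_1$ is isotone, (c) $p_1$ is continuous (preserves all existing meets), and (d) $p_1$ is top-preserving.

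For (a), I would just chase definitions: given $f \in O_1$ with arity $N_f$ and elements $(a_i,b_i) \in A_1 \times A_2$, the definition $\hat{f}((a_1,b_1),\ldots,(a_{N_f},b_{N_f})) = (f(a_1,\ldots,a_{N_f}), \tilde{f}(b_1,\ldots,b_{N_f}))$ gives $p_1 \circ \hat{f} = f \circ (p_1,\ldots,p_1)$ immediately. Condition (b) is also immediate from the componentwise definition of $\leq$ on $A_1 \times A_2$. For (d), I would first observe that $(1_1, 1_2)$ is the top of $A_1 \times A_2$ (since every coordinate must lie below $1_1$ and $1_2$ respectively), and then $p_1(1_1,1_2) = 1_1$ is exactly the top of $A_1$.

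The one condition requiring actual work is (c), and this is where I would spend most of the argument. Let $S \subseteq A_1 \times A_2$ be a subset for which $\bigwedge S$ exists in the product, and write $\bigwedge S = (c_1, c_2)$. The goal is to show that $\bigwedge p_1(S)$ exists in $A_1$ and equals $c_1$. That $c_1$ is a lower bound for $p_1(S)$ is clear: if $(s_1,s_2) \in S$ then $(c_1,c_2) \leq (s_1,s_2)$ forces $c_1 \leq_1 s_1$. The nontrivial step is to show $c_1$ is the \emph{greatest} lower bound. Given any other lower bound $d_1 \in A_1$ for $p_1(S)$, I would use the second coordinate of the infimum as a ``witness'' and form the pair $(d_1, c_2) \in A_1 \times A_2$. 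Since $c_2 \leq_2 s_2$ for each $(s_1, s_2) \in S$ (by the analogous argument in the second coordinate) and $d_1 \leq_1 s_1$ by assumption, one gets $(d_1, c_2) \leq (s_1,s_2)$ for all elements of $S$. Thus $(d_1, c_2)$ is a lower bound of $S$ in the product, so $(d_1, c_2) \leq (c_1, c_2)$, and reading off the first coordinate yields $d_1 \leq_1 c_1$, as needed.

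The only potential pitfall I anticipate is the continuity step: one must be careful not to assume a priori that meets in the product are computed componentwise (the statement allows $\bigwedge S$ to exist even when the projected meets are not separately assumed to exist). The witness-pair trick above is precisely what bridges this gap, and it is really the heart of the argument.
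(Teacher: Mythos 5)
Your proposal is correct and follows essentially the same route as the paper: the paper likewise dismisses conditions (a), (b), (d) as immediate and devotes the proof to continuity, using exactly your witness-pair trick (pairing a candidate lower bound $d_1$ with the second coordinate of $\bigwedge S$ to show it lies below $\bigwedge S$ in the product). No gaps; the only difference is that you spell out the trivial conditions slightly more explicitly than the paper does.
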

\begin{proof}
The only non-trivial property to prove is the continuity of $p_i$, so consider $S \subseteq A_1 \times A_2$ such that $\bigwedge S$ exists, and define $(a,b) := \bigwedge S$.  We consider $p_1$, the proof for $p_2$ is similar.  

Now, for all $(c, d) \in S$, we have that $(a,b) \leq (c,d)$, and hence $a \leq c$, so that $a$ is a lower bound for the set of all $c$ such that $(c,d) \in S$ for some $d \in A_2$ (i.e.~for $p_1(S)$).  Also, for any $c' \in A_1$ which is a lower bound for $p_1(S)$, we have that $(c ', b) \leq (c, d)$ for all $(c, d) \in S$, and hence $(c ' ,b) \leq (a,b)$, so that $c ' \leq a$.  Hence $a$ is the greatest lower bound for $p_1(S)$, i.e.~$p_1(\bigwedge S) = \bigwedge p_1 (S)$.
\end{proof}

The following concept will be central in what follows.

\begin{kap}
Let $A$ be a truth-value algebra.  Then $A$ is said to be \emph{irreducible} if whenever $A~\simeq~A_1~\times~A_2$, we have that either $A_1 \simeq \mathbf{1}$ or $A_2 \simeq \mathbf{1}$.
\end{kap}

\subsection{First-Order Algebraic Logic}
\label{FOalgebraiclogic}

\subsubsection{Syntax}

The syntax we use for first-order algebraic logic is similar to that typically used for first-order classical logic, the only difference is that the propositional logical connectives may be different.

First, fix a type $T = (\alpha, \tau)$.  We define a \emph{$T$-language} $\mathcal{L}$ to consist of a set of predicate, function, and propositional connective symbols, which we assume are disjoint and denote by $\mathcal{L_P}$, $\mathcal{L_F}$, and $\mathcal{L_C}$, respectively, so that $\mathcal{L} = \mathcal{L_P} \cup \mathcal{L_F} \cup \mathcal{L_C}$, and where $\mathcal{L_C}$ is in 1-1 correspondence with $\alpha$.   Each element of a $T$-language has an associated non-negative integer, called its $\emph{arity}$ (functions of arity $0$ are said to be \emph{constants}), and the arity of a given logical connective $c_i \in \mathcal{L_C}$ (corresponding to $i \in \alpha$) is defined to be $\tau(i)$.

We define our variables to run over some countable set, and use the symbols $x,y,z,x_1, x_2, \ldots$ as meta-variables, i.e.~variables which each stand for some variable in our aforementioned countable set.  We then define $\mathcal{L}$-\emph{terms} in the standard way starting with variables and constants as atomic terms and proceeding inductively using function symbols of higher arity.  Similarly,  we define well-formed formulas for a given language $\mathcal{L}$ (which we call $\mathcal{L}$-\emph{wffs}) inductively using the propositional connectives along with the universal quantifier ``$\forall$'' in the standard manner.\footnote{We restrict our attention to first-order logics where the existential quantifier ``$\exists$'' only occurs as a defined symbol; for example, in standard first-order logic with Boolean connectives, $\exists := \neg \forall \neg \,$.  One could extend the results of this paper to first-order logics with ``$\exists$'' defined independently by using analogous constraints involving least upper bounds wherever we have used greatest lower bounds in order to deal with  ``$\forall$''.}

We also use the usual notion of free and bound variables, and for an $\mathcal{L}$-wff $\psi$, we write $\psi(x_1, \ldots, x_n)$ to mean that the free variables occurring in $\psi$ are among $x_1, \ldots, x_n$ (although not all of them may actually occur free in $\psi$).  For an $n$-tuple of constants $(a_1, \ldots, a_n )$, we then write $\psi(a_1, \ldots, a_n)$ for that $\mathcal{L}$-wff which is obtained from $\psi$ by substituting $a_i$ for each free occurrence of $x_i$, with $i \in \{1, \ldots, n\}$. If an $\mathcal{L}$-wff has no free variables, we say that it is an \emph{$\mathcal{L}$-sentence}.

\subsubsection{Models for Algebraic Logic}
\label{modelsAL}

For any $T$-language $\mathcal{L}$, we define an $\mathcal{L}$-structure for algebraic logic in a fashion analogous to that for classical logic, except that we allow any truth-value algebra of type $T$ to serve as the allowed truth values, rather than any Boolean algebra (or just $\mathbf{2}$, depending on the particular development of classical logic).  

In order to accomplish this definition, we first need a language rich enough to refer to elements of a given structure.  To this end, given a $T$-language $\mathcal{L}$, and a set $M$ (disjoint from $\mathcal{L}$), we define $\mathcal{L}^M$ to be the $T$-language given by $\mathcal{L} \cup M$, where each element of $M$ is taken to be a function symbol of arity $0$ (i.e.~a constant).  Note that since $\mathcal{L} \subseteq \mathcal{L}^M$, every $\mathcal{L}$-wff is an $\mathcal{L}^M$-wff.  Next, we need to interpret our function symbols as actual maps in our models, which motivates the following notion.  For a given language $\mathcal{L}$, associated function symbols $\mathcal{L_F}$, and a given set $M$, an \emph{interpretation of $\mathcal{L}$ in $M$} is a map which assigns to each function symbol $f \in \mathcal{L_F}$ of arity $n$ a map from $M^n \to M$.  Note that if $F$ is an interpretation of $\mathcal{L}$ in $M$, we can always extend $F$ to an interpretation of $\mathcal{L}^M$ in $M$ by taking $F(a) = a$ for each $a \in M$.  Whenever necessary in the sequel, we will consider $F$ to be so extended without further comment.  Then, given any $\mathcal{L}^M$-term $t$ containing no variables, we can map this term to $M$ in the usual way, which we denote by $F(t)$.

Finally, we need to assign truth values (elements of a given truth-value algebra $A$) to our $\mathcal{L}^M$-wffs.  In order to do this in a way which respects the usual meaning of our logical connectives we make the following definition.

\begin{kap}
\label{def:truth_valuation}
Let $\mathcal{L}$ be a $T$-language, $A$ a truth-value algebra with labeling function $\ell$, $M$ a set, and $F$ an interpretation of $\mathcal{L}$ in $M$.  Then a \emph{truth valuation for $F$ in $A$} is a surjective\footnote{We could have equally well proceeded to develop our model theory without the condition of surjectivity --- however this assumption will streamline some of the presentation in the sequel.} map $\nu$ from the set of $\mathcal{L}^M$-sentences to $A$ which satisfies the following properties:
\begin{enumerate}[(1)]
\item $\nu \big( P(t_1, \ldots, t_n) \big) = \nu \big( P ( F(t_1), \ldots, F(t_n) ) \big) $ for any predicate $P \in \mathcal{L_P}$ of arity $n$ and $\mathcal{L}^M$-terms $t_1, \ldots, t_n$ with no variables;
\item $\nu (c_i(\psi_1, \ldots, \psi_n) ) = f_i ( \nu( \psi_1), \ldots , \nu(\psi_n) )$ for any connective $c_i \in \mathcal{L_C}$ of arity $n$, where $f_i = \ell^{-1}(i)$, and for any $\mathcal{L}^M$-sentences $\psi_1, \ldots, \psi_n$;
\item Whenever $\displaystyle \bigwedge_{a \in M} \nu \big( \psi(a) \big)$ exists for a given $\mathcal{L}^M$-wff $\psi(x)$, we have that \\ $\displaystyle \nu \big( (\forall x) \psi (x) \big) =  \bigwedge_{a \in M} \nu \big( \psi(a) \big)$;
\item $\displaystyle \bigwedge_{a \in M} \nu \big( \psi(a) \big) $ exists for any $\mathcal{L}^M$-wff $\psi(x)$.
\end{enumerate}
(The break-up of statements (3) and (4) above may seem odd --- we do this for ease of reference in the sequel.)  We can then define $\nu(\psi(x_1, \ldots, x_n)) = \nu \big( (\forall x_1) \cdots (\forall x_n) \psi(x_1, \ldots, x_n) \big)$ for any $\mathcal{L}^M$-wff $\psi$, thus, extending $\nu$ to all $\mathcal{L}^M$-wffs.  We will always consider our truth valuations to be so extended.
\end{kap}

For any $T$-language $\mathcal{L}$, we now define an \emph{$\mathcal{L}$-structure}  to be a tuple $(M,A,\nu, F)$ where $M$ is a set (called the \emph{universe}), $A$ is a truth-value algebra of type $T$, $F$ is an interpretation of $\mathcal{L}$ in $M$, and $\nu$ is a truth valuation for $F$ in $A$.  Then, given a set of $\mathcal{L}$-wffs $\Gamma$, and an $\mathcal{L}$-structure $\mathfrak{M} := (M,A, \nu, F)$, we say that $\mathfrak{M}$ is a \emph{model for $\Gamma$} if $\nu \big( \gamma(a_1, \ldots, a_n) \big) = 1$ for every $\gamma(x_1, \ldots, x_n) \in \Gamma$ and $a_1, \ldots, a_n \in M$.  For any $\mathcal{L}$-structure $\mathfrak{M}$ with truth valuation $\nu$, we will say that an $\mathcal{L}$-wff $\gamma$ \emph{holds in $\mathfrak{M}$} (or write $\mathfrak{M} \vDash \gamma$) if $\nu(\gamma) = 1$.  This means that $\mathfrak{M}$ is a model of $\Gamma$ if and only if every $\gamma \in \Gamma$ holds in $\mathfrak{M}$.  In what follows, we will be particularly interested in models for which $A$ is an irreducible truth-value algebra, motivating the following definition.

\begin{kap}
\index{irreducible!model}
\index{model!irreducible}
\label{def:irreducible}
Let $\mathcal{L}$ be a $T$-language, and let $\mathfrak{M}$ be an $\mathcal{L}$-structure.  If the truth-value algebra of $\mathfrak{M}$ is irreducible, then $\mathfrak{M}$ is called an \emph{irreducible $\mathcal{L}$-structure} (or \emph{irreducible model}, if $\mathfrak{M}$ is a model).
\end{kap}

The truth of a given $\mathcal{L}$-wff will be decided with respect to an allowed class of models, which motivates the formal notion of a semantics for a logic.

\begin{kap}
Let $\mathcal{L}$ be a $T$-language, and let $\mathcal{T}$ be a class of $\mathcal{L}$-structures.  Then $\mathcal{T}$ is said to be an \emph{$\mathcal{L}$-semantics}.  
\end{kap}

Whenever a given model (or structure) has a product truth-value algebra, we can use the natural projection maps on the truth-value algebras to construct new models (or structures).

\begin{prop}
\label{prop:product_model}
Let $\mathcal{L}$ be a $T$-language, let $\Gamma$ be a set of $\mathcal{L}$-wffs, and let $\mathfrak{M} = (M, A_1 \times A_2, \nu , F)$ (where $A_1,A_2$ are both truth-value algebras of type $T$) be a model of $\Gamma$.  Then $\mathfrak{M}_i := (M, A_i, p_i \circ \nu, F)$ is a model of $\Gamma$, where $p_i:A_1 \times A_2 \to A_i$ is the natural projection map onto $A_i$ (with $i = 1,2$).  
\end{prop}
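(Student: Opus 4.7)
The plan is to verify that $p_i \circ \nu$ satisfies the four defining properties of a truth valuation (Definition \ref{def:truth_valuation}) with respect to $F$ in $A_i$, and then to check that every $\gamma \in \Gamma$ still evaluates to $1$ under the new valuation. All of the heavy lifting is already done by Proposition \ref{prop:p_hom}, which tells us that each $p_i$ is a truth-value algebra homomorphism, and in particular is surjective, an algebra homomorphism, continuous, and top-preserving.

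First, I would check surjectivity of $p_i \circ \nu$: since $\nu$ is surjective onto $A_1 \times A_2$ by hypothesis, and $p_i$ is surjective onto $A_i$ by Proposition \ref{prop:p_hom}, the composite is surjective. Property (1) of Definition \ref{def:truth_valuation} transfers immediately by applying $p_i$ to both sides of $\nu(P(t_1,\ldots,t_n)) = \nu(P(F(t_1),\ldots,F(t_n)))$. For property (2), given a connective $c_j \in \mathcal{L_C}$ of arity $n$ with associated operation $f_j = \ell^{-1}(j)$ on $A_1 \times A_2$, the corresponding operation on $A_i$ is $\tilde{f_j} = \ell_i^{-1}(j)$; since $p_i$ is an algebra homomorphism, applying $p_i$ to $\nu(c_j(\psi_1,\ldots,\psi_n)) = f_j(\nu(\psi_1),\ldots,\nu(\psi_n))$ yields exactly $(p_i \circ \nu)(c_j(\psi_1,\ldots,\psi_n)) = \tilde{f_j}((p_i \circ \nu)(\psi_1),\ldots,(p_i \circ \nu)(\psi_n))$, as required.

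For properties (3) and (4), I would use continuity of $p_i$. By property (4) applied to $\nu$, $\bigwedge_{a\in M} \nu(\psi(a))$ exists in $A_1 \times A_2$ for every $\mathcal{L}^M$-wff $\psi(x)$. By continuity of $p_i$, $\bigwedge_{a\in M} p_i(\nu(\psi(a)))$ exists in $A_i$ and equals $p_i\bigl(\bigwedge_{a\in M}\nu(\psi(a))\bigr)$, giving property (4) for $p_i \circ \nu$. Combining this with property (3) for $\nu$ gives
\[
(p_i \circ \nu)\bigl((\forall x)\psi(x)\bigr) = p_i\Bigl(\bigwedge_{a\in M}\nu(\psi(a))\Bigr) = \bigwedge_{a\in M} (p_i \circ \nu)(\psi(a)),
\]
which is property (3) for $p_i \circ \nu$. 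Thus $p_i \circ \nu$ is a truth valuation for $F$ in $A_i$, and $\mathfrak{M}_i$ is a well-defined $\mathcal{L}$-structure.

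Finally, to confirm that $\mathfrak{M}_i$ is a model of $\Gamma$, I would fix $\gamma(x_1,\ldots,x_n) \in \Gamma$ and $a_1,\ldots,a_n \in M$. Since $\mathfrak{M}$ is a model of $\Gamma$, $\nu(\gamma(a_1,\ldots,a_n))$ equals the top element of $A_1 \times A_2$, and since $p_i$ is top-preserving, $(p_i \circ \nu)(\gamma(a_1,\ldots,a_n))$ is the top element of $A_i$. There is no real obstacle here — the only subtle step is invoking continuity to transfer the greatest lower bound through $p_i$ for the universal quantifier clause, and Proposition \ref{prop:p_hom} has already packaged this for us.
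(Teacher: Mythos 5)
Your proof is correct and follows essentially the same route as the paper's: verify surjectivity of the composite, transfer properties (1)--(3) using the fact that $p_i$ is a homomorphism, use continuity of $p_i$ (Proposition \ref{prop:p_hom}) for properties (3)--(4), and use top-preservation for the model condition. The only tiny slip is attributing surjectivity of $p_i$ to Proposition \ref{prop:p_hom} --- homomorphisms need not be surjective under the paper's definition --- but surjectivity of a projection map is immediate anyway, so nothing is lost.
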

\begin{proof}
For any $\gamma \in \Gamma$, $\nu(\gamma) = 1$ (since $\mathfrak{M}$ is a model of $\Gamma$), and so $p_i \circ \nu(\gamma) = 1$ (the top element of $A_1 \times A_2$ is easily seen to be $(1,1)$).  Also, since $\nu$ and $p_i$ are both surjective,  $p_i \circ \nu$ is also surjective.  Moreover, conditions (1)-(3) of def.~\ref{def:truth_valuation} are satisfied since $\nu$ is a truth valuation and $p_i$ is a homomorphism.  It remains to show only that $p_i \circ \nu$ satisfies condition (4) of that same definition.

To this end, consider an $\mathcal{L}$-wff $\psi(x)$, and let $(\gamma, \beta) = \bigwedge_{a \in M}\nu(\psi(a))$.  Since $p_i$ is a homomorphism (proposition \ref{prop:p_hom}), it is, in particular, continuous, and hence $\gamma$ is the greatest lower bound for the set $\{ p_i \circ \nu(\psi(a)) \ : \ a \in M \}$, establishing condition (4) of def.~\ref{def:truth_valuation}.
\end{proof}

With this in mind, one natural property that an $\mathcal{L}$-semantics can possess is the following:

\begin{kap}
Let $\mathcal{L}$ be a $T$-language, and let $\mathcal{T}$ be an $\mathcal{L}$-semantics.  Then $\mathcal{T}$ is said to be \emph{factor-closed} when for every $\mathcal{L}$-structure $(M,A_1 \times A_2, \nu, F)   \in \mathcal{T}$, we have that both $(M,A_1,p_1 \circ \nu, F)$ and $(M,A_2,p_2 \circ \nu, F)$ are in $\mathcal{T}$, where $p_i:A_1 \times A_2 \to A_i$ (for $i = 1,2$) are the natural projection maps.
\end{kap}

\subsubsection{Formal Deduction for Algebraic Logic}

For a given $T$-language $\mathcal{L}$, an \emph{$\mathcal{L}$-rule} is a pair $(\Gamma, \gamma)$, where $\Gamma$ is a finite set of $\mathcal{L}$-wffs, and $\gamma$ is a single $\mathcal{L}$-wff.  This is a formalization of the idea that ``$\gamma$ follows from $\Gamma$''.  A \emph{formal deduction system for $\mathcal{L}$} is then a pair $(\mathcal{A},\mathcal{R})$, where $\mathcal{A}$ is a set of $\mathcal{L}$-wffs (representing ``logical axioms''), and $\mathcal{L}$ is a set of $\mathcal{L}$-rules (the ``rules of inference'').  

A \emph{formal proof in a given formal deduction system $(\mathcal{A}, \mathcal{R})$ from $\Gamma$} (where $\Gamma$ is a set of $\mathcal{L}$-wffs, representing the hypotheses of the proof) is then a finite tuple $(\gamma_1, \ldots, \gamma_n)$ of $\mathcal{L}$-wffs such that, for each $\gamma_i$, $i=1, \ldots, n$, we have either (i) $\gamma_i \in \mathcal{A} \cup \Gamma$, or (ii) there exists a set $\Gamma_0 \subseteq \{ \gamma_1, \ldots, \gamma_{i-1} \}$ for which $(\Gamma_0, \gamma_i) \in \mathcal{R}$.  If a formal proof for $\gamma$ in a deduction system $\mathcal{D}$ exists from some set of $\mathcal{L}$-wffs $\Gamma$, we write $\Gamma \vdash \gamma$ (where we eschew a $\mathcal{D}$-dependent notation since the deduction system will always be clear from the context).  We now formally define the notions of soundness and completeness for our logics.

\begin{kap}
\label{thm:completeness}
Let $\mathcal{L}$ be a $T$-language, let $\mathcal{T}$ be an $\mathcal{L}$-semantics, and let $\mathcal{D}$ be a formal deduction system for $\mathcal{L}$.
\begin{enumerate}
  \item $\mathcal{D}$ is said to be \emph{complete for $\mathcal{T}$} if, for any set of $\mathcal{L}$-wffs $\Gamma$, whenever $\psi$ is an $\mathcal{L}$-wff satisfying $\mathfrak{M} \vDash \psi$ for every $\mathfrak{M} \in \mathcal{T}$ which is an $\mathcal{L}$-model of $\Gamma$, we have $\Gamma \vdash \psi$.
  \item $\mathcal{D}$ is said to be \emph{sound for $\mathcal{T}$} if, for any set of $\mathcal{L}$-wffs $\Gamma$, whenever $\psi$ is an $\mathcal{L}$-wff satisfying $\Gamma \vdash \psi$, then for any model $\mathfrak{M} \in \mathcal{T}$ of $\Gamma$, we have that $\mathfrak{M} \vDash \psi$.
\end{enumerate}
\end{kap}

\section{Main Theorem}

\label{sec:main}

We will now prove our main result.

\vspace{.2in}

\begin{mainthrm*}
\label{thm:irreducibles_suffice}
Let $\mathcal{L}$ be a $T$-language, let $\mathcal{T}$ be an $\mathcal{L}$-semantics which is factor-closed, and let there exist some formal deduction system $\mathcal{D}$ which is both sound and complete for $\mathcal{T}$.  Further let $\Gamma$ be a set of $\mathcal{L}$-wffs, and let $\psi$ be an $\mathcal{L}$-wff which holds in every irreducible model of $\Gamma$ which is in $\mathcal{T}$.  Then $\psi$ holds in every model of $\Gamma$ which is in $\mathcal{T}$.
\end{mainthrm*}
\begin{proof}
We prove the contrapositive, so let $\psi$ be an $\mathcal{L}$-wff and assume that there is some model of $\Gamma$ in $\mathcal{T}$ in which $\psi$ does not hold.  We will show that there is some irreducible model of $\Gamma$ in $\mathcal{T}$ in which $\psi$ does not hold.

By the assumption of completeness, we must have that $\Gamma$ does not prove $\psi$ in $\mathcal{D}$.  Now define $\mathcal{S}_\Gamma$ to be the set which consists of exactly those sets of $\mathcal{L}$-wffs $\Phi$ which satisfy both (i) $\Gamma \subseteq \Phi$, and (ii)~$\nu(\psi) \neq 1$ for some truth valuation $\nu$ in some model of $\Phi$ which is in $\mathcal{T}$.  Clearly $\mathcal{S}_\Gamma$ is non-empty.

$\mathcal{S}_\Gamma$, like any set of sets, is partially ordered by inclusion, and we will use Zorn's lemma to prove that $\mathcal{S}_\Gamma$ contains some maximal element $\Omega$.  Let $\mathcal{C} \subseteq \mathcal{S}_\Gamma$ be linearly ordered under inclusion, and define $U := \bigcup_{c \in \mathcal{C}} c $.  Clearly $U$ is an upper bound for $\mathcal{C}$ under inclusion, so it suffices to show that $U \in \mathcal{S}_\Gamma$.  Property (i) which defines $\mathcal{S}_\Gamma$ clearly holds, since trivially $\Gamma \subseteq U$.  We prove property (ii) by contradiction, so assume $\nu(\psi) = 1$ for every truth valuation $\nu$ in every model of $U$, and hence $U \vdash \psi$ by the completeness of $\mathcal{D}$.  The formal proof of $U \vdash \psi$ is finite by definition, and so may only contain a finite number of $\mathcal{L}$-wffs $\gamma_1, \gamma_2, \ldots, \gamma_n \in U$, where each $\gamma_i \in \Phi_i$ for some $\Phi_i \in \mathcal{C}$.  Since $\mathcal{C}$ is linearly ordered by inclusion, we must have that $\bigcup_{i=1}^n \Phi_i = \Phi_j$ for some $j \in \{1, \ldots, n\}$, and since $\{\gamma_1, \ldots, \gamma_n \} \subseteq \Phi_j$, we have that $\Phi_j \vdash \psi$, so that, by the soundness of $\mathcal{D}$, $\nu(\psi) = 1$ for every truth valuation $\nu$ in every model of $\Phi_j$, which is a contradiction since $\Phi_j \in \mathcal{S}_{\Gamma}$.  Hence we must have that $U \in \mathcal{S}_\Gamma$, and since $\mathcal{C}$ was an arbitrary chain in $\mathcal{S}_\Gamma$, Zorn's lemma gives that $\mathcal{S}_\Gamma$ has a maximal element $\Omega$.

Since $\Omega \in S_{\Gamma}$, there exists a model $\mathfrak{M} := (M, A, \nu, F)$ of $\Omega$ in $\mathcal{T}$ in which $\nu(\psi) \neq 1$.  We claim that the truth-value algebra $A$ in this model is irreducible.  Assume not, so that $A \simeq A_1 \times A_2$ with both $A_1, A_2$ non-trivial.  By proposition \ref{prop:product_model}, this produces two new models of $\Omega$ with corresponding truth-value algebras $A_1$ and $A_2$ (and truth valuations $\nu_1 := p_1 \circ \nu$ and $\nu_2 := p_2 \circ \nu$, respectively).  Of course, these models are both in $\mathcal{T}$, since $\mathcal{T}$ is factor-closed.  Since $\nu(\psi) \neq 1$, we must have (without loss of generality) $\nu_1(\psi) \neq 1$.  Let $\chi$ be some $\mathcal{L}$-wff such that $\nu_1(\chi) = 1$ and $\nu_2(\chi) \neq 1$.  Clearly, $\chi \notin \Omega$, since $\nu(\chi) \neq 1$.  Now, we cannot have $\Omega \cup \{ \chi \} \vdash \psi$, since then we would have $\nu_1(\psi) = 1$.   Hence, we must have that $\Omega \cup \{ \chi \} \in \mathcal{S}_\Gamma$, but this means that $\Omega$ is a proper subset of $ \Omega \cup \{ \chi \}$, which contradicts the maximality of $\Omega$ in $\mathcal{S}_\Gamma$.  Hence, we must have that $A$ is irreducible.  This shows that $\mathfrak{M}$ is an irreducible model of $\Gamma$ in which $\nu(\psi) \neq 1$.
\end{proof}

So we see that for any first-order algebraic logic (as defined in section \ref{FOalgebraiclogic}) which is formulated with respect to a factor-closed class of algebras, a Hilbert-style deduction system is sound and complete for that class of algebras if and only if it is sound and complete for the irreducible algebras in that class.  In particular, this result applies to quantum logic, to which we now turn our attention.

\pagebreak

\section{First-Order Quantum Logics}
\label{QL}

Any first-order quantum logic (at least as considered in this paper) is defined through its model theory, and the model theory is entirely analogous to that of classical logic, except that any fixed variety of orthomodular lattices is allowed to serve as the class of truth-value algebras as opposed to only Boolean algebras \cite{Dishkant_74, Greechie}.
In section \ref{OMLs} we briefly discuss orthomodular lattices (see~\cite{Kalmbach}), and in section \ref{detailsQL} we precisely define first-order quantum logics and apply our main theorem.

\subsection{Orthomodular Lattices}
\label{OMLs}

An \emph{orthomodular lattice} is a tuple $(L,\leq,O,\ell)$, where $(L,\leq)$ is a poset with a top element, $O = \{ \wedge, \vee, \neg \}$, and $\ell(\wedge) = 0$, $\ell(\vee) = 1$ and $\ell(\neg) = 2$ (so that $N_{\wedge} = N_{\vee} = 2$ and $N_{\neg} = 1$,
i.e.~orthomodular lattices have type $(2,2,1)$).  Additionally, the following constraints\footnote{If only (ii)-(iii) are satisfied, $L$ is called an \emph{involutive lattice}, while if (ii)-(iv) are satisfied, it is called an \emph{ortholattice}.} must be satisfied for all $a,b \in L$.
\begin{enumerate}[(i)]
\item $\bigwedge \{a,b\}$ exists and $a \wedge b = \bigwedge \{a,b\}$,
\item $a \leq b$ implies that $\neg b \leq \neg a$,
\item $\neg (\neg a) = a$,
\item $a \vee \neg a = 1$
\item $a \leq b$ implies $a \vee (\neg a \wedge b) = b$.
\end{enumerate}
Boolean algebras are special cases of orthomodular lattices --- to wit, Boolean algebras are exactly the orthomodular lattices $(L,\leq,O,\ell)$ in which all pairs of elements are \emph{compatible}, i.e.~for all $a,b \in L$,
\begin{enumerate}[(vi)]
\item $a = (a \wedge b) \vee (a \wedge \neg b)$.
\end{enumerate}
In particular, both $\mathbf{1}$ and $\mathbf{2}$ are orthomodular lattices.

In an orthomodular lattice $L$, the \emph{center of $L$} (denoted $\mathcal{Z}(L)$) is the subset of $L$ consisting of elements of $L$ which are compatible with every element in $L$.
Note that for any orthomodular lattice $L$, $\mathcal{Z}(L)$ is a Boolean sub-algebra of $L$.
A characterization of irreducibility in orthomodular lattices is given by the following proposition.
\begin{prop}
Let $L$ be an orthomodular lattice. Then $L$ is \emph{irreducible} if and only if $\mathcal{Z}(L) = \{ 0, 1 \}$.
\end{prop}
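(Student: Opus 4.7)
The plan is to prove both directions by producing, respectively, a nontrivial central element from a nontrivial product decomposition, and a nontrivial product decomposition from a nontrivial central element.

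For the easier direction $(\Leftarrow)$, I will prove the contrapositive: if $L$ is not irreducible, then $\mathcal{Z}(L)$ has more than two elements. Suppose $L \simeq L_1 \times L_2$ with neither $L_i$ trivial, and let $z$ correspond to $(1,0) \in L_1 \times L_2$. Then for any element $x \leftrightarrow (a,b)$, a direct coordinatewise computation shows $(x \wedge z) \vee (x \wedge \neg z) = (a,0) \vee (0,b) = (a,b) = x$, so $z$ is compatible with every element of $L$, i.e.\ $z \in \mathcal{Z}(L)$. Since $L_1, L_2$ are nontrivial, $(1,0) \neq (0,0)$ and $(1,0) \neq (1,1)$, so $z \notin \{0,1\}$.

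For the harder direction $(\Rightarrow)$, I again use the contrapositive: given $z \in \mathcal{Z}(L)$ with $z \notin \{0,1\}$, I will exhibit an isomorphism $L \simeq L_1 \times L_2$ with both factors nontrivial. Take $L_1 := \downset z = \{a \in L : a \leq z\}$ and $L_2 := \downset \neg z = \{a \in L : a \leq \neg z\}$, each equipped with the induced order, with meet and join inherited from $L$, and with negations $\neg_1 a := \neg a \wedge z$ and $\neg_2 a := \neg a \wedge \neg z$. Standard facts about ortholattice intervals below a central element show that $L_1$ and $L_2$ are orthomodular lattices with top elements $z$ and $\neg z$ respectively, and both are nontrivial since $z \neq 0,1$. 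Define the map $\phi:L \to L_1 \times L_2$ by $\phi(a) := (a \wedge z,\ a \wedge \neg z)$, with proposed inverse $\psi(b,c) := b \vee c$.

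The bulk of the work is verifying that $\phi$ is a bijective homomorphism. The key input everywhere is that central elements satisfy full distributivity with every element of $L$: for any $x \in L$ and $z \in \mathcal{Z}(L)$, one has $x = (x \wedge z) \vee (x \wedge \neg z)$ and moreover $(x \vee y) \wedge z = (x \wedge z) \vee (y \wedge z)$ whenever the distributivity expansion is guaranteed by centrality. Using this, $\psi \circ \phi = \mathrm{id}_L$ follows from compatibility of $a$ with $z$, while $\phi \circ \psi = \mathrm{id}_{L_1 \times L_2}$ reduces to computing $(b \vee c) \wedge z = (b \wedge z) \vee (c \wedge z) = b \vee 0 = b$ for $b \leq z$, $c \leq \neg z$, and symmetrically for the second coordinate. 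Compatibility of $\phi$ with $\wedge$ is immediate, with $\neg$ it follows from $\neg a \wedge z = \neg(a \wedge z) \wedge z$ (using $a \leq z$ on the relevant interval), and preservation of $\vee$ is handled by applying De Morgan or by again invoking central distributivity. Order and top-preservation are then immediate, and continuity for the meets that exist reduces to the same distributive identities applied pointwise.

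The main obstacle will be the routine but delicate verification that the interval lattices $\downset z$ and $\downset \neg z$ really are orthomodular, and that $\phi$ preserves all the operations; every step ultimately rests on the central distributivity $(x \vee y) \wedge z = (x \wedge z) \vee (y \wedge z)$ for $z \in \mathcal{Z}(L)$, which is the standard lemma characterizing centers of orthomodular lattices and can be cited from Kalmbach rather than rederived.
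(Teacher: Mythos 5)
The paper states this proposition without proof, deferring implicitly to the standard literature on orthomodular lattices cited at the start of Section~4.1 (Kalmbach), so there is no in-paper argument to compare yours against. Your proof is the standard one and is correct: a nontrivial factorization yields the central element corresponding to $(1,0)$, and a central $z \notin \{0,1\}$ yields the decomposition $L \simeq [0,z] \times [0,\neg z]$ via $a \mapsto (a \wedge z,\, a \wedge \neg z)$ with inverse $(b,c) \mapsto b \vee c$, all verifications resting on the distributivity enjoyed by central elements (Foulis--Holland), which you rightly identify as the single lemma worth citing rather than rederiving. Two small points to tighten. First, the paper's definition of the center reads ``$z$ is compatible with every $x$,'' i.e.\ $z = (z\wedge x)\vee(z\wedge\neg x)$, whereas in the $(\Leftarrow)$ direction you verify $x = (x\wedge z)\vee(x\wedge\neg z)$; these agree because compatibility is symmetric in an orthomodular lattice, but you should either invoke that symmetry or simply compute $(z\wedge x)\vee(z\wedge\neg x) = (a,0)\vee(\neg a,0) = (1,0) = z$ directly. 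Second, since the paper's notion of isomorphism of truth-value algebras also demands continuity (preservation of existing infima), the cleanest closing observation is that $\phi$ and $\psi$ are mutually inverse isotone maps, hence order isomorphisms, which preserve all existing meets automatically; this is tidier than re-running the distributive identities ``pointwise.'' Neither point is a gap in substance.
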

This is to say that the irreducible orthomodular lattices are exactly those orthomodular lattices which have, as their center, the only irreducible Boolean algebra $\mathbf{2}$.

\subsection{Irreducibles Suffice for Quantum Logic}
\label{detailsQL}

For a given quantum logic, we fix type $(2,2,1)$, where $\mathcal{L_C} = \{ \wedge, \vee, \neg \}$,\footnote{We use the same symbols for the logical connectives as for operations in orthomodular lattices, but no confusion should result.} and we let $\mathcal{V}$ denote the variety of orthomodular lattices with which the quantum logic is associated.  In this context, the relevant $\mathcal{L}$-structures are those tuples $(M,L,\nu, F)$ (as defined in section \ref{modelsAL}), where the truth-value algebra $L$ is an orthomodular lattice; models are defined analogously --- for the full details see \cite{DeJonghe_13, Frey_13}.
Per definition \ref{def:irreducible}, an $\mathcal{L}$-structure is irreducible exactly when the truth-value algebra $L$ is an irreducible orthomodular lattice.
We obtain an $\mathcal{L}$-semantics $\mathcal{T}_{\mathcal{V}}$ for the quantum logic by taking $\mathcal{T}_{\mathcal{V}}$ to be the class of $\mathcal{L}$-structures such that the truth-value algebras run over all orthomodular lattices in $\mathcal{V}$.  (If $\mathcal{V}$ is just the variety of Boolean algebras, this yields classical logic.)
Note that any  variety is closed under homomorphic images (by Birkhoff's HSP theorem \cite{Burris}); as such, we have that $\mathcal{T}_{\mathcal{V}}$ is factor closed.  In 1974 Dishkant constructed a formal deduction system for a quantum logic based on the  variety $\mathcal{V}$ consisting of all orthomodular lattices, and he demonstrated that it was both sound and complete for $\mathcal{T}_{\mathcal{V}}$ \cite{Dishkant_74}; an alternative system was provided in \cite{IDF, DeJonghe_13, Frey_13}. An analogous result can easily be seen to hold for any variety of orthomodular lattices.

Now, since a given quantum logic satisfies the assumptions of the Main Theorem, we see that a formal deduction system is both sound and complete for $\mathcal{T}_{\mathcal{V}}$ if and only if it is sound and complete for a semantics $\mathcal{T}^{0}_{\mathcal{V}}$ utilizing only irreducible orthomodular lattices in $\mathcal{V}$.  Note, in particular, that each truth-value algebra $L_0$ associated with a model in $\mathcal{T}^{0}_{\mathcal{V}}$ has $\mathcal{Z}(L_0)= \mathbf{2}$, so that taking $\mathcal{V}$ to be the variety of orthomodular lattices consisting of Boolean algebras, we recover exactly the original theorem (stated in section \ref{intro}) for classical logic.

Clearly, our Main Theorem greatly simplifies investigations into the model theory of any quantum logic, as it allows one to focus solely on irreducible orthomodular lattices in $\mathcal{V}$, rather than \emph{all} of~$\mathcal{V}$. We note that the projection lattice of a (separable) Hilbert space of any given dimension (even infinite) is an irreducible orthomodular lattice, and the primary reason for interest in logics based on projection lattices stems from their relevance to quantum theory itself \cite{Greechie}. For some concrete applications of our result to the development of mathematics based on quantum logic --- in particular set theory and arithmetic --- see \cite{IDF, DeJonghe_13, Frey_13}.

\section{Open Questions}

\label{sec:conc}

Just as we have proved a more powerful completeness result allowing the restriction of our semantics to irreducible models, we would like to know if there are other sub-classes of models for which an analogous completeness result holds.  In the classical case, we have completeness with respect to the single Boolean algebra $\mathbf{2}$, and this algebra satisfies other properties besides irreducibility, such as being subdirectly irreducible \cite{Burris} and being a complete lattice with respect to the partial order.  We would like to know which of our first-order algebraic logics are complete with respect to models whose truth-value algebras are either all subdirectly irreducible or all complete lattices, just as they are with respect to irreducible models.

\bibliographystyle{unsrt}
\bibliography{bibliography}

\end{document}